\documentclass[12pt]{article}
\usepackage{amsmath}
\usepackage{float}
\usepackage{amsthm}
\usepackage{amssymb}
\usepackage[utf8x]{inputenc}
\usepackage{caption}
\usepackage{graphics}
\usepackage{enumerate} 
\usepackage{algorithmic}
\usepackage{calrsfs}
\DeclareMathAlphabet{\pazocal}{OMS}{zplm}{m}{n}

\usepackage[Algorithm,ruled]{algorithm}
\usepackage{titlesec}
\usepackage{sectsty}
\titleformat*{\section}{\LARGE\bfseries}
\titleformat*{\subsection}{\Large\bfseries}
\titleformat*{\subsubsection}{\large\bfseries}
\sectionfont{\Huge}
\subsectionfont{\large}
\subsubsectionfont{\normalsize}
\usepackage{multicol}
\usepackage{lmodern}
\usepackage{marvosym} 
\usepackage{lipsum}
\usepackage{mwe}
\usepackage{caption}
\usepackage{subcaption} 
\newtheoremstyle{case}{}{}{}{}{}{:}{ }{}
\theoremstyle{case}

\newcommand{\be}{\begin{equation}}
\newcommand{\ee}{\end{equation}}
\newcommand{\ben}{\begin{eqnarray*}}
\newcommand{\een}{\end{eqnarray*}}
\newtheorem{examp}{\sc example}
\newtheorem{remk}{\sc remark}
\newtheorem{corol}{\sc corollary}
\newtheorem{lemma}{\sc lemma}
\newtheorem{theorem}{\sc theorem}
\newtheorem{defn}{\sc definition}
\newtheorem{prop}{\sc proposition}
\newcommand{\bt}{\begin{theorem}}
\newcommand{\et}{\end{theorem}}
\newcommand{\bl}{\begin{lemma}}
\newcommand{\el}{\end{lemma}}
\newcommand{\bed}{\begin{defn}}
\newcommand{\eed}{\end{defn}}
\newcommand{\brem}{\begin{remk}}
\newcommand{\erem}{\end{remk}}
\newcommand{\bex}{\begin{examp}}
\newcommand{\eex}{\end{examp}}
\newcommand{\bcl}{\begin{corol}}
\newcommand{\ecl}{\end{corol}}

\topmargin -0.3cm \evensidemargin -0.2cm \oddsidemargin -0.2cm
\textheight 9in \textwidth 6in

\newcommand{\NI}{\noindent}

%\renewcommand{\thefootnote}{\fnsymbol{footnote}}
%\def \qed {\hfill \vrule height6pt width 6pt depth 0pt}
%\theoremstyle{plain}
%\newtheorem{proposition}{Proposition}[section]
%\newtheorem{defn}{Definition}[section]
%\newtheorem{thm}{theorem}[section]
%\newtheorem{lem}[thm]{lemma}
%\numberwithin{theorem}{section}
%\numberwithin{lemma}{section}

%\newtheorem{prop}[thm]{Proposition}
\theoremstyle{definition}
\theoremstyle{remark}

\numberwithin{equation}{section}
\numberwithin{theorem}{section}
\numberwithin{lemma}{section}

%\numberwithin{Definition}{section}

%\usepackage{multicol}
%\usepackage{lmodern}
%\usepackage{marvosym} 
%\usepackage{lipsum}
%\usepackage{mwe}
%\usepackage{caption}
%\usepackage{subcaption} 

%\newtheorem{theorem}{Theorem}[section]
%\newtheorem{proposition}{Proposition}[section]
%\newtheorem{lemma}{Lemma}[section]

\begin{document}

\title{\large\bf\sc Complementarity Problem With Nekrasov $Z$ tensor}

\author{ 
R. Deb$^{a,1}$ and A. K. Das$^{b,2}$\\
\emph{\small $^{a}$Jadavpur University, Kolkata, India.}\\	
\emph{\small $^{b}$Indian Statistical Institute, Kolkata, India.}\\
%\emph{\small $^{1}$Email: rwitamjanaju@gmail.com}\\
%\emph{\small $^{1}$Email: aritradutta001@gmail.com}\\
\emph{\small $^{1}$Email: rony.knc.ju@gmail.com}\\
\emph{\small $^{2}$Email: akdas@isical.ac.in}\\
}

\date{}

\maketitle

\begin{abstract}
\NI It is worth knowing that a particular tensor class belongs to $P$-tensor which ensures the compactness to solve tensor complementarity problem (TCP). In this study, we propose a new class of tensor, Nekrasov $Z$ tensor, in the context of the tensor complementarity problem. We show that the class of $P$-tensor contains the class of even ordered Nekrasov $Z$ tensors with positive diagonal elements. In this context, we propose a procedure by which a Nekrasov $Z$ tensor can be transformed into a tensor which is diagonally dominant.\\

\noindent{\bf Keywords:} Diagonally dominant tensor, Nekrasov tensors, Nonsingular $H$ tensor, Nekrasov $Z$ tensor, $P$-tensors, Tensor complementarity problem.\\

\noindent{\bf AMS subject classifications:} 15A69, 99C33, 65F15
\end{abstract}
\footnotetext[1]{Corresponding author}

\section{Introduction}
$\mathcal{T}= (t_{j_1 ... j_m}) $ has multidimensional array of elements $t_{j_1... j_m} \in \mathbb{C}$ where $j_i \in [n]$ with $i\in [m],$ $[n]=\{1,...,n\}.$ The set of $m$th order $n$ dimensional complex (real) tensors is denoted by $\mathbb{C}^{[m,n]}\; (\mathbb{R}^{[m,n]}).$ % If under any permutation of the indices $i_1, i_2, ... ,i_m$ the elements $t_{i_1 i_2... i_m}$ are invariant then the tensor $\mathcal{T}= (t_{i_1 i_2... i_m}) \in \mathbb{C}^{[m,n]}$ is called a symmetric tensor. 
Several classes of tensors have significant impact on different branches of science and engineering. Applications of tensors can be found in physics, quantum computing, diffusion tensor imaging, image authenticity verification problem, spectral hypergraph theory, optimization theory and in several other areas. In optimization theory, Song and Qi \cite{song2017properties} introduced the tensor complementarity problem (TCP) which is identified a subclass of the nonlinear complementarity problem. Here functions are obtained with the help of a tensor.

\NI Given $\mathcal{T}\in \mathbb{R}^{[m,n]}$ and $q\in\mathbb{R}^n,$ TCP is to obtain $x\in \mathbb{R}^n $ such that
\begin{equation}\label{tensor comp equation}
	x\geq 0,\;\;  \mathcal{T}x^{m-1}+q \geq 0,\;\;  x^T (\mathcal{T}x^{m-1}+q)  = 0.
\end{equation}
This problem is denoted by TCP$(\mathcal{T}, q)$ and the solution set of TCP$(\mathcal{T}, q)$ is denoted by SOL$(\mathcal{T}, q).$
TCP are observed in optimization theory, game theory and in other areas. Some mathematical problem can be reformulated TCP, such as a class of multiperson noncooperative game \cite{huang2017formulating}, hypergraph clustering problem and traffic equilibrium problem \cite{huang2019tensor}.

\NI TCP can be viewed as one kind of extension of the linear complementarity problem (LCP) where the involved functions are homogeneous polynomials constructed with the help of a tensor.
\NI Given $M\in \mathbb{R}^{[2,n]}$ and $q\in \mathbb{R}^n$, the LCP \cite{cottle2009linear} is to obtain $u\in \mathbb{R}^n$ such that
\begin{equation}\label{linear comp equation}
 u\geq 0,\;\;  Mu + q \geq 0,\;\; u^T (Mu + q) = 0.
\end{equation}
The problem is denoted by LCP$(M,q)$ and the solution set of LCP$(M,q)$ is denoted by SOL$(M,q).$ The stability of a large number of problems can be explained with the formulation of a linear complementarity problem. For details see \cite{das2019some}, \cite{neogy2006some}, \cite{dutta2021some} \cite{dutta2022on}, \cite{neogy2013weak}, \cite{jana2018semimonotone}, \cite{neogy2005almost}, \cite{neogy2011singular}, \cite{jana2019hidden}, \cite{jana2021more}, \cite{neogy2009modeling}, \cite{das2017finiteness}, \cite{das2018invex}. For the formulation of conflicting situations see \cite{mondal2016discounted}, \cite{neogy2008mathematical}, \cite{neogy2008mixture}, \cite{neogy2005linear}, \cite{neogy2016optimization}, \cite{das2016generalized}, \cite{neogy2011generalized} and  \cite{mohan2004note}. In connection with linear complementarity problem computation methods are dependent on the matrix classes. For details see \cite{mohan2001classes}, \cite{mohan2001more} \cite{neogy2005principal}, \cite{das2016properties}, \cite{neogy2012generalized}, \cite{jana2019hidden}, \cite{jana2021iterative}, \cite{jana2021more}, \cite{jana2018processability}.

\NI Nekrasov $Z$-matrix was introduced by Orea and Pe$\tilde{n}$a \cite{orera2019accurate} to study the methods to compute its inverse with high relative accuracy.% Esnaola and Pe$\tilde{n}$a \cite{garcia2016b} introduced $B$-Nekrasov matrix and studied the error bounds for the linear complementarity problem involving a $B$-Nekrasov matrix.

\NI Several matrix types play a crusial role in the theory of linear complementarity problem, similarly, some structured tensors are important to study tensor complementarity theory. In recent years some special types of matrices are extended to tensors in connection with TCP. Among the several tensor classes, the class of $P(P_0)$-tensor, $Z$ tensor, $M$-tensor, $H$ tensor, and Nekrasov tensor are of special interest. $Z$ tensor and $M$-tensor are introduced by Zhang et al. \cite{zhang2014m} in context of tensor complementarity problem. They prove that for a $Z$ tensor the TCP has the least element property. Ding et al. \cite{ding2013m} introduce $H$ tensor and establish its importance in the tensor complementarity problem. Zhang and Bu \cite{zhang2018nekrasov} introduce the Nekrasov tensor and show that a Nekrasov tensor can be posed as a $H$ tensor.% The positive definite tensors have many important applications in multivariate network realizability analysis, automatic control, medical imaging, and so on \cite{basser2002diffusion}, \cite{ni2008eigenvalue}, \cite{ qi2010higher}.

\NI Song and Qi \cite{song2014properties} introduce $P$-tensor. Various properties of $P$-tensors are considered in connection with TCP. For details see \cite{bai2016global}, \cite{ding2018p}, \cite{yuan2014some}.% Qi and Luo \cite{qi2017tensor} have studied Several important results related to positive definite tensors.
% The positive definiteness of a homogeneous polynomial is equivalent to the positive definiteness of the symmetric tensor associated with it \cite{kannan2015some}. Since an even order real symmetric nonsingular $H$ tensor with positive diagonal elements is positive definite \cite{li2014criterions}, nonsingular H-tensors play an important role in checking the positive definiteness of even order real symmetric tensors.

\NI In this paper, we introduce Nekrasov $Z$ tensor. We study the connection of Nekrasov $Z$ tensor and diagonally dominant tensor. We prove that the class of Nekrasov $Z$ tensor of even order with positive diagonal elements is a subclass of $P$-tensor.

The paper is organized as follows. Section 2 contains few essential and early observations. In Section 3, we develop Nekrasov $Z$ tensor.  We investigate its connection with the $P$-tensor.

\section{Preliminaries}
Here we consider some basic notations and present few useful definitions and theorems used in this paper. $x\in \mathbb{C}^n$ denotes a column vector and row transpose of $x$ is denoted by $x^T.$ A diagonal matrix $W=[w_{ij}]_{n \times n}=diag(w_1, \; ..., \; w_n)$ is defined as $w_{ij}=\left \{ \begin{array}{ll}
	  w_i  &;\; \forall \; i=j, \\
	  0  &; \; \forall \; i \neq j.
	   \end{array}  \right.$

\begin{defn}\cite{mangasarian1976linear}
$A\in \mathbb{R}^{[2,n]},$ is a $Z$-matrix if all its off-diagonal elements are nonpositive.
\end{defn}	   

\begin{defn}\cite{cvetkovic2009new,bailey1969bounds,kolotilina2015some, garcia2014error}
A matrix $M=(m_{ij})\in \mathbb{C}^{[2,n]},$ is a Nekrasov matrix if $\lvert m_{ii} \rvert> \Lambda_i(M), \; \forall\; i\in [n],$ where
\begin{center}
    $\Lambda_i(M) =\left\{\begin{array}{ll}
        \sum_{j=2}^n \lvert m_{ij} \rvert &, i=1,  \\
        \sum_{j=1}^{i-1} \lvert m_{ij} \rvert \frac{\Lambda_j(M)}{\lvert m_{jj} \rvert} + \sum_{j=i+1}^n \lvert m_{ij} \rvert &, i=2,3,...,n.
    \end{array} \right.$
\end{center}
\end{defn}

\begin{defn}\cite{orera2019accurate}
$A\in \mathbb{R}^{[2,n]}$ is Nekrasov $Z$-matrix, if $A$ is a Nekrasov mtrix as well as a $Z$-matrix.
\end{defn}

%\begin{defn}\cite{garcia2016b}
%Given a real matrix $A(a_{ij})\in \mathbb{R}^{[2,n]},$ we can write $A$ as $A= B^+ + C,$ where
%\begin{center}
%    $B^+ = \left(\begin{array}{ccc}
%        a_{11}-r_1^+ & ... & a_{1n}-r_1^+ \\
%        \vdots & \vdots & \vdots \\
%        a_{n1}-r_n^+ & ... & a_{nn}-r_n^+
%    \end{array} \right)$ and 
%    $C = \left(\begin{array}{ccc}
%        r_1^+ & ... & r_1^+ \\
%        \vdots & \vdots & \vdots \\
%        r_n^+ & ... & r_n^+
%    \end{array} \right)$
%\end{center}
%with $r_i^+ = \max \{0, a_{i,j} | j\neq i\}.$ The matrix $A$ is said to be a $B$-Nekrasov matrix if $B^+$ is a Nekrasov $Z$-matrix whose diagonal elements are all positive.
%\end{defn}

%\NI An identity tensor of order $m,$ $\mathcal{I}=(\deltt_{i_1 i_2... i_m})\in \mathbb{C}^{[m,n]}$ is defined as follows:
%$\deltt_{i_1 i_2... i_m}= \left\{
%\begin{array}{ll}
%	  1  &:\; i_1= ...= i_m \\
%	  0  &:\; else
%	   \end{array}
% \right. .$
\noindent Let $\mathcal{O}$ be the zero tensor with zero at each entry of $\mathcal{O}.$ For $\mathcal{T}\in \mathbb{C}^{[m,n]}$ and $x\in \mathbb{C}^n,\; \mathcal{T}x^{m-1}\in \mathbb{C}^n $ is a vector defined by
\[ (\mathcal{T}x^{m-1})_i = \sum_{i_2, ..., i_m =1}^{n} t_{i_1  ...i_m} x_{i_2}  \cdots x_{i_m} , \mbox{   for all } i \in [n] \]
$\mathcal{T}x^m\in \mathbb{C} $ is a scalar defined by
\[ x^T\mathcal{T}x^{m-1}= \mathcal{T}x^m = \sum_{i_1, ..., i_m =1}^{n} t_{i_1  ...i_m} x_{i_1} \cdots x_{i_m}. \]

\NI Let $\mathcal{U}$ and $\mathcal{V}$ be two $n$ dimensional tensor of order $p \geq 2$ and $r \geq 1,$ respectively. Shao \cite{shao2013general} introduced the general product of $\mathcal{U}$ and $\mathcal{V}.$ Let $\mathcal{U} \cdot \mathcal{V} = \mathcal{T}.$ Then $\mathcal{T}$ is an $n$ dimensional tensor of order $((p-1)(r-1)) + 1$ where the elements of $\mathcal{T}$ are given by
\[t_{j \beta_1 \cdots \beta_{p-1} } =\sum_{j_2, \cdots ,j_p \in [n]} u_{j j_2 \cdots j_p} v_{j_2 \beta_1} \cdots v_{j_p \beta_{p-1}},\] where $j \in [n]$, $\beta_1, \cdots, \beta_{p-1} \in [n]^{r-1}$

%\begin{defn}\cite{song2016properties}
%Given $\mathcal{T} \in \mathbb{R}^{[m,n]} $ and $q\in \mathbb{R}^n$, a vector $x$ is said to be (strictly) feasible solution of TCP$(\mathcal{T},q),$ if $x \geq 0\; (>0)$ and $\mathcal{T}x^{m-1}+q \geq 0\; (>0)$.
%\end{defn}

%\begin{defn}\cite{song2016properties}
%Given $\mathcal{T} \in \mathbb{R}^{[m,n]} $ and $q\in \mathbb{R}^n$, TCP$(\mathcal{T},q)$ is said to be (strictly) feasible if a (strictly) feasible vector exists.
%\end{defn}

\begin{defn}\cite{song2016properties}
Given $\mathcal{T} \in \mathbb{R}^{[m,n]} $ and $q\in \mathbb{R}^n$, $x$ solves TCP$(\mathcal{T},q)$ if there exists a $x$ satisfying the equation (\ref{tensor comp equation}).
\end{defn}

\begin{defn}\cite{song2015properties}
$\mathcal{T}\in \mathbb{R}^{[m,n]} $ is a $P$-tensor, if for any $x\in \mathbb{R}^n \backslash \{0\}$, $\exists$ a $j\in [n]$ such that $x_j \neq 0$ and $x_j (\mathcal{T}x^{m-1})_j >0.$
\end{defn}

\NI The row subtensors are defined in \cite{shao2016some}. Here for the sake of convenience we denote $i$th rowsubtensor of $\mathcal{T}$ by $\mathcal{R}_i(\mathcal{T}).$
\begin{defn}\cite{shao2016some}
For each $i$ the $i$th row subtensor of $\mathcal{T}\in \mathbb{C}^{[m,n]}$ is denoted by $\mathcal{R}_i(\mathcal{T})$ and its elements are given as $(\mathcal{R}_i(\mathcal{T}))_{i_2 ... i_m}=(t_{i i_2... i_m})$, where $i_l\in [n]$ and $2\leq l \leq m.$
\end{defn}

\begin{defn}\cite{zhang2018nekrasov}
Let $\mathcal{T}\in \mathbb{C}^{[m,n]}$ such that $\mathcal{T}=(t_{i_1 i_2 ... i_m})$ and
\begin{equation*}
    R_i(\mathcal{T})= \sum_{(i_2 ... i_m) \neq (i ... i)} \lvert t_{i i_2 ... i_m} \rvert,\; \forall \; i.
\end{equation*}
The tensor $\mathcal{T}$ is diagonally dominant (strict diagonally dominant) if $\lvert t_{i ... i} \rvert \geq (>) R_i(\mathcal{T})$ for all $i\in [n].$
\end{defn}

\begin{defn}\cite{zhang2018nekrasov}
$\mathcal{T}\in \mathbb{C}^{[m,n]}$ is quasidiagonally dominant tensor if there exists a diagonal matrix $W=diag(w_1, w_2, \cdots, w_n)$ such that $\mathcal{T} W$ is strictly diagonally dominant tensor, i.e.,
\begin{equation*}
  \lvert t_{i ... i} \rvert w_i^{m-1} > \lvert t_{i i_2 \cdots i_m}\rvert w_{i_2} \cdots w_{i_m}, \;\;\; \forall\; i\in [n]. 
\end{equation*}
\end{defn}

\NI Qi \cite{qi2005eigenvalues} and Lim \cite{lim2005singular} proposed the idea of eigenvalues and eigenvectors in case of tensors. If a pair $(\lambda, x) \in \mathbb{C}\times (\mathbb{C}^n \backslash \ \{0\})$ satisfies the equation $\mathcal{T} x^{m-1} = \lambda x^{m-1},$ then $\lambda$ is an eigenvalue of $\mathcal{T}$, and $x$ is an eigenvector with respect to $\lambda$ of $\mathcal{T}$, for which $x^{[m−1]} = (x_1^{m−1},... , x_n^{m−1})^T.$ The spectral radius of $\mathcal{T}$ is defined as $\rho(\mathcal{T}) = \max\{|\lambda| \; : \; \lambda \text{ is an eigen value of } \mathcal{T}\}.$

\begin{defn}\cite{ding2013m}
$\mathcal{T}$ is a $Z$ tensor if all its off diagonal elements are nonpositive.
\end{defn}
\begin{defn}\cite{ding2013m}
A $Z$ tensor $\mathcal{T}$ is an $M$ tensor if $\exists$ $\mathcal{B}\; \geq \mathcal{O}$ and $s>0$ such that $\mathcal{T}= s \mathcal{I} - \mathcal{B}$, where $s\geq \rho(\mathcal{B})$. $\mathcal{T}$ is a nonsingular $M$ tensor if $s>\rho(\mathcal{B}).$
\end{defn}

\begin{defn}\cite{ding2013m}
For a tensor $\mathcal{T}= t_{i_1 i_2 ... i_m}\in \mathbb{C}^{[m,n]},$ $\mathcal{C}(\mathcal{T})= c_{i_1 i_2 ... i_m}$ is a comparison tensor of $\mathcal{T}$ if 
\begin{center}
    $c_{i_1 i_2 ... i_m}=\left\{\begin{array}{ll}
       |t_{i_1 i_2 ... i_m}|  & , \text{ if } (i_1, i_2, ... ,i_m) =(i,i, ..., i), \\
       -|t_{i_1 i_2 ... i_m}|  & , \text{ if } (i_1, i_2, ... ,i_m) \neq (i,i, ..., i).
    \end{array} \right.$
\end{center}
\end{defn}

\begin{defn}\cite{ding2013m}
$\mathcal{T}$ is an $H$ tensor, if its comparison tensor is an $M$-tensor, and it is called a nonsingular $H$ tensor if its comparison tensor is a nonsingular $M$-tensor.
\end{defn}

\begin{defn}\cite{zhang2018nekrasov}
For $\mathcal{T}=(t_{i_1 i_2 ... i_m}) \in \mathbb{C}^{[m,n]}, \; t_{i i ... i}\neq 0,$ $\forall \;i$ and
\begin{align*}
    \Lambda_1(\mathcal{T}) & =R_1(\mathcal{T}),\\ 
    \Lambda_i(\mathcal{T}) & =\sum_{i_2...i_m \in [i-1]^{m-1}} \lvert t_{i i_2 ... i_m} \rvert \left(\frac{\Lambda_{i_2} (\mathcal{T})}{\lvert t_{i_2  ... i_2} \rvert}\right)^{\frac{1}{m-1}} \cdots \left(\frac{\Lambda_{i_m} (\mathcal{T})}{\lvert t_{i_m  ... i_m} \rvert}\right)^{\frac{1}{m-1}}\\ 
    & \qquad + \sum_{i_2...i_m \notin [i-1]^{m-1}, (i_2 ... i_m)\neq (i ... i)} \lvert t_{i i_2 ... i_m} \rvert, \;\;\;\; i=2,3,...,n. 
\end{align*}
$\mathcal{T}=(t_{i_1 i_2 ... i_m})\in \mathbb{R}^{[m,n]}$ is a Nekrasov tensor if 
 \begin{equation}
     \lvert t_{i  ... i} \rvert > \Lambda_i(\mathcal{T}), \; \forall\; i\in [n].
 \end{equation}
\end{defn}

\begin{theorem}\cite{ding2013m}\label{nonsingular H if and only if quasi-SDD}
$\mathcal{T}$ is a nonsingular $H$ tensor iff it is a quasi-strictly diagonally dominant tensor.
\end{theorem}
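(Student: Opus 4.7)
The plan is to recast both sides of the equivalence in terms of the comparison tensor $\mathcal{C}(\mathcal{T})$ and reduce the statement to the standard characterization of nonsingular $M$-tensors via positive images.

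First I would unpack the general product $\mathcal{T}W$ for a diagonal $W=\mathrm{diag}(w_1,\ldots,w_n)$ with $w_i>0$, which yields $(\mathcal{T}W)_{i\,i_2\cdots i_m}=t_{i\,i_2\cdots i_m}w_{i_2}\cdots w_{i_m}$. Consequently $\mathcal{T}W$ is strictly diagonally dominant precisely when $|t_{i\cdots i}|w_i^{m-1}>\sum_{(i_2\cdots i_m)\neq(i\cdots i)}|t_{i\,i_2\cdots i_m}|w_{i_2}\cdots w_{i_m}$ for all $i\in[n]$, which is exactly the componentwise inequality $\mathcal{C}(\mathcal{T})w^{m-1}>0$. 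Therefore $\mathcal{T}$ is quasi-strictly diagonally dominant iff there exists $w>0$ with $\mathcal{C}(\mathcal{T})w^{m-1}>0$; and since $\mathcal{C}(\mathcal{T})$ is automatically a $Z$-tensor with positive diagonal, the theorem reduces to the auxiliary claim: a $Z$-tensor $\mathcal{A}$ with positive diagonal is a nonsingular $M$-tensor iff some $w>0$ satisfies $\mathcal{A}w^{m-1}>0$.

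For the ($\Leftarrow$) half of the auxiliary claim I would write $\mathcal{A}=s\mathcal{I}-\mathcal{B}$ with $s>\max_i a_{i\cdots i}$ so that $\mathcal{B}\geq\mathcal{O}$. The hypothesis $\mathcal{A}w^{m-1}>0$ gives $\max_i (\mathcal{B}w^{m-1})_i/w_i^{m-1}<s$, and the Collatz–Wielandt-type upper bound for the spectral radius of a nonnegative tensor then forces $\rho(\mathcal{B})<s$, making $\mathcal{A}$ a nonsingular $M$-tensor. For the ($\Rightarrow$) half, start from $\mathcal{A}=s\mathcal{I}-\mathcal{B}$ with $s>\rho(\mathcal{B})$ and $\mathcal{B}\geq\mathcal{O}$, and perturb $\mathcal{B}$ to the strictly positive tensor $\mathcal{B}_\varepsilon=\mathcal{B}+\varepsilon\mathcal{E}$ (with $\mathcal{E}$ the all-ones tensor) for $\varepsilon>0$ small enough that $\rho(\mathcal{B}_\varepsilon)<s$; continuity of the spectral radius in the entries justifies this choice. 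Since $\mathcal{B}_\varepsilon$ is strictly positive, hence weakly irreducible, Perron–Frobenius delivers a positive eigenvector $w>0$ with $\mathcal{B}_\varepsilon w^{m-1}=\rho(\mathcal{B}_\varepsilon)w^{[m-1]}$. Then $\mathcal{A}w^{m-1}=sw^{[m-1]}-\mathcal{B}w^{m-1}\geq sw^{[m-1]}-\mathcal{B}_\varepsilon w^{m-1}=(s-\rho(\mathcal{B}_\varepsilon))w^{[m-1]}>0$, which is exactly what is needed.

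The main obstacle is the Perron–Frobenius toolkit for nonnegative tensors, specifically the existence of a positive Perron eigenvector for a positive (or weakly irreducible) tensor together with the Collatz–Wielandt upper bound $\rho(\mathcal{B})\leq\max_i (\mathcal{B}w^{m-1})_i/w_i^{m-1}$. These are nontrivial generalizations of classical matrix theorems (due to Chang–Pearson–Zhang, Friedland–Gaubert–Han and Yang–Yang), and both the continuity of the spectral radius under the perturbation step and the strict inequality in the Collatz–Wielandt bound deserve care; once these are available, everything else is bookkeeping through the comparison tensor.
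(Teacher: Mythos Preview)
The paper does not supply its own proof of this statement; it appears in the Preliminaries section as a result quoted from Ding, Qi and Wei \cite{ding2013m}, with no argument given. There is therefore nothing in the present paper to compare against.

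Your outline is correct and is essentially the standard proof from that reference: you translate quasi-strict diagonal dominance of $\mathcal{T}$ into the existence of a positive vector $w$ with $\mathcal{C}(\mathcal{T})w^{m-1}>0$, and then reduce to the well-known equivalence, for a $Z$-tensor $\mathcal{A}$, between being a nonsingular $M$-tensor and admitting some $w>0$ with $\mathcal{A}w^{m-1}>0$, which you establish via the Perron--Frobenius and Collatz--Wielandt machinery for nonnegative tensors. One small remark: the diagonal of $\mathcal{C}(\mathcal{T})$ is $|t_{i\cdots i}|$, which is only nonnegative a priori, not positive; however, positivity is forced on each side of the equivalence separately (strict dominance gives $|t_{i\cdots i}|w_i^{m-1}>0$, and a nonsingular $M$-tensor necessarily has positive diagonal), so the auxiliary claim applies as stated and the argument goes through.
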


\begin{theorem}\cite{zhang2018nekrasov}\label{nekrasov implies nonsingular H}
If a tensor $\mathcal{T}=(t_{i_1 i_2 ... i_m})\in \mathbb{C}^{[m,n]}$ is a Nekrasov tensor, then $\mathcal{T}$ is a nonsingular $H$ tensor.
\end{theorem}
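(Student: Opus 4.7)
The strategy is to apply Theorem \ref{nonsingular H if and only if quasi-SDD}: a tensor is a nonsingular $H$ tensor if and only if it is quasi-strictly diagonally dominant, so it suffices to exhibit a positive diagonal matrix $W=\mathrm{diag}(w_1,\ldots,w_n)$ making $\mathcal{T}W$ strictly diagonally dominant, i.e.\ for every $i\in[n]$,
$$|t_{i\cdots i}|\,w_i^{m-1} \;>\; \sum_{(i_2,\ldots,i_m)\neq(i,\ldots,i)} |t_{i\,i_2\cdots i_m}|\,w_{i_2}\cdots w_{i_m}.$$
The natural candidate, suggested by the recursive definition of $\Lambda_i(\mathcal{T})$, is
$$w_i \;:=\; \left(\frac{\Lambda_i(\mathcal{T})}{|t_{i\cdots i}|}\right)^{\!\!1/(m-1)},$$
with a tiny positive replacement in any row where $\Lambda_i(\mathcal{T})=0$. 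The Nekrasov hypothesis $|t_{i\cdots i}|>\Lambda_i(\mathcal{T})$ then ensures $0<w_i<1$ for every $i$.

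With this choice, the left-hand side of the required inequality equals $\Lambda_i(\mathcal{T})$ exactly. For the right-hand side, I would partition the off-diagonal multi-indices $(i_2,\ldots,i_m)$ into those lying in $[i-1]^{m-1}$ and the complement. By the definition of $w_{i_k}$, the first group reproduces verbatim the first summand of $\Lambda_i(\mathcal{T})$. For the second group, each factor $w_{i_k}$ satisfies $w_{i_k}<1$, so the contribution is bounded above by $\sum |t_{i\,i_2\cdots i_m}|$ taken over the complementary indices, which is precisely the second summand of $\Lambda_i(\mathcal{T})$. Adding the two bounds gives $\text{RHS}\le\Lambda_i(\mathcal{T})=\text{LHS}$, so $\mathcal{T}W$ is at least weakly diagonally dominant.

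The main obstacle is upgrading this weak inequality to a strict one in every row simultaneously, since equality can occur when the second summand of $\Lambda_i(\mathcal{T})$ has only zero coefficients (for example, row $n$, whose off-diagonal support may be confined to $[n-1]^{m-1}$). I would remedy this by a staircase perturbation: set $\widehat{w}_i := w_i+\epsilon_i$ with $\epsilon_i := K^{\,i}\epsilon$ for a sufficiently large constant $K>1$ and a sufficiently small $\epsilon>0$. In every tight row $i$, the leading-order increment of the LHS, of order $|t_{i\cdots i}|w_i^{m-2}\epsilon_i\sim K^{\,i}\epsilon$, strictly dominates the leading-order increment of the RHS, which involves only $\epsilon_{i_k}$ with $i_k\le i-1$ and is therefore of order at most $K^{\,i-1}\epsilon$. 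For every strict row, the inequality is stable under the perturbation for small $\epsilon$. Thus $\mathcal{T}\widehat W$ is strictly diagonally dominant, and Theorem \ref{nonsingular H if and only if quasi-SDD} concludes that $\mathcal{T}$ is a nonsingular $H$ tensor.
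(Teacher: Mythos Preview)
The paper does not prove this statement; it is quoted as a preliminary result from \cite{zhang2018nekrasov}. Your choice of scaling $w_i=(\Lambda_i(\mathcal{T})/|t_{i\cdots i}|)^{1/(m-1)}$ is exactly the one the present paper employs in its second main theorem to show that $\mathcal{T}W$ is (weakly) diagonally dominant, and your observation that this yields only a non-strict inequality is correct and to the point.

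The genuine gap is in the perturbation step. Your leading-order comparison for a tight row $i$ reads the LHS increment as $(m-1)|t_{i\cdots i}|\,w_i^{\,m-2}K^{i}\epsilon$, but this coefficient vanishes whenever $w_i=0$ and $m\ge 3$. The ``tiny positive replacement'' you invoke beforehand does not repair this: once some $w_j$ with $\Lambda_j(\mathcal{T})=0$ is lifted to a positive value, the identity ``first group equals the first summand of $\Lambda_i(\mathcal{T})$'' that you use for the remaining rows is destroyed, so you no longer know that $\mathrm{RHS}\le\mathrm{LHS}$ holds before the staircase is applied. A concrete failure: take $m=3$, $n=3$ with $t_{111}=1$, $t_{222}=t_{333}=10$, $t_{213}=t_{312}=1$, all other entries zero. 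One checks $\Lambda_1=\Lambda_3=0$ and $\Lambda_2=1$, so this is Nekrasov with $w_1=w_3=0$, $w_2=10^{-1/2}$. With $\widehat w_i=w_i+K^{i}\epsilon$, row $3$ gives $\mathrm{LHS}=10K^{6}\epsilon^{2}$ while $\mathrm{RHS}=\widehat w_1\widehat w_2\sim 10^{-1/2}K\epsilon$; for every fixed $K>1$ the strict inequality fails for all sufficiently small $\epsilon$. A correct argument (as in the matrix literature) chooses the $w_i$ inductively in $i$, taking $w_i^{m-1}$ strictly between $|t_{i\cdots i}|^{-1}\bigl(\sum_{[i-1]^{m-1}}|t_{ii_2\cdots i_m}|w_{i_2}\cdots w_{i_m}+\sum_{\text{rest}}|t_{ii_2\cdots i_m}|\bigr)$ and $1$, so that strict dominance is enforced row by row rather than recovered via a uniform additive perturbation.
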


\begin{theorem}\cite{ding2018p}\label{nonsingular H implies P}
A nonsingular $H$ tensor of even order with all positive diagonal elements is a $P$-tensor.
\end{theorem}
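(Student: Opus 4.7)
The plan is to combine the quasi-strict-diagonal-dominance characterization of nonsingular $H$ tensors (Theorem \ref{nonsingular H if and only if quasi-SDD}) with a standard ``extremal index'' argument, where the even-order hypothesis is used to force the diagonal contribution to $(\mathcal{T} x^{m-1})_j$ to have the correct sign.

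First, I would apply Theorem \ref{nonsingular H if and only if quasi-SDD} to obtain positive scalars $w_1,\ldots,w_n>0$ (assembled into $W=\mathrm{diag}(w_1,\ldots,w_n)$) such that $\mathcal{T}W$ is strictly diagonally dominant; using the positivity of the diagonal entries of $\mathcal{T}$, this reads
\[
t_{i\cdots i}\, w_i^{m-1} \;>\; \sum_{(i_2,\ldots,i_m)\neq(i,\ldots,i)} |t_{i i_2 \cdots i_m}|\, w_{i_2}\cdots w_{i_m}, \qquad \forall\, i\in[n].
\]
Given an arbitrary $x\in\mathbb{R}^n\setminus\{0\}$, set $y_i=x_i/w_i$ and pick an index $j$ attaining $|y_j|=\max_i|y_i|$. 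Since $x\neq 0$ and the $w_i$ are positive, $|y_j|>0$, so in particular $x_j\neq 0$.

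Next, I would isolate the diagonal entry of $(\mathcal{T}x^{m-1})_j$, multiply through by $x_j$, and use that $m$ even gives $x_j^m=|x_j|^m$, yielding
\[
x_j(\mathcal{T}x^{m-1})_j \;=\; t_{j\cdots j}|x_j|^m \;+\; x_j\!\!\sum_{(i_2,\ldots,i_m)\neq(j,\ldots,j)}\!\! t_{ji_2\cdots i_m}\,x_{i_2}\cdots x_{i_m}.
\]
The off-diagonal sum is then bounded via the triangle inequality combined with $|x_{i_\ell}|\le (|x_j|/w_j)\,w_{i_\ell}$; using the strict diagonal dominance of $\mathcal{T}W$ above, one obtains
\[
\bigg|\,x_j\!\!\sum_{(i_2,\ldots,i_m)\neq(j,\ldots,j)}\!\! t_{ji_2\cdots i_m}\,x_{i_2}\cdots x_{i_m}\bigg| \;\le\; \frac{|x_j|^m}{w_j^{m-1}}\!\!\sum_{(i_2,\ldots,i_m)\neq(j,\ldots,j)}\!\! |t_{ji_2\cdots i_m}|\,w_{i_2}\cdots w_{i_m} \;<\; t_{j\cdots j}|x_j|^m.
\]
Combining the last two displays gives $x_j(\mathcal{T}x^{m-1})_j>0$, which is exactly the $P$-tensor condition.

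The only delicate point, and the reason the hypothesis ``even order'' is imposed, is the sign of the diagonal term: for odd $m$, the quantity $x_j^m$ would be negative whenever $x_j<0$, and the positive diagonal entry $t_{j\cdots j}$ would no longer dominate in the desired direction. With $m$ even, the identity $x_j^m=|x_j|^m$ is automatic and the dominance-plus-triangle argument closes cleanly; this parity issue, rather than any heavy computation, is the real obstacle to overcome.
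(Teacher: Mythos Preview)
Your argument is correct and is essentially the standard proof of this result. Note, however, that the present paper does not supply its own proof of this theorem: it is stated in the preliminaries with a citation to \cite{ding2018p} and is used as a black box in the proof of Theorem~\ref{1st theorem of Nekrasov Z tensor}. What you have written is precisely the argument given in the cited reference: invoke Theorem~\ref{nonsingular H if and only if quasi-SDD} to get positive scaling weights $w_i$, choose the index $j$ maximizing $|x_i|/w_i$, and use strict diagonal dominance of $\mathcal{T}W$ together with $x_j^m=|x_j|^m$ (from $m$ even) to force $x_j(\mathcal{T}x^{m-1})_j>0$. One small point worth making explicit is that the diagonal matrix $W$ furnished by Theorem~\ref{nonsingular H if and only if quasi-SDD} can indeed be taken with strictly positive entries (this is part of the standard formulation of quasi-strict diagonal dominance for $H$ tensors), so that the ratios $y_i=x_i/w_i$ are well defined; you assume this implicitly, and it is harmless, but the paper's Definition~2.8 does not state it.
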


\begin{theorem}\cite{bai2016global}
For $q\in \mathbb{R}^n$ and a $P$-tensor $\mathcal{T} \in \mathbb{R}^{[m,n]}$, the solution set of TCP$(\mathcal{T},q)$ is nonempty and compact.
\end{theorem}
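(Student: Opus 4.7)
The plan is to split the theorem into compactness and nonemptiness and handle them separately. Compactness will follow from a direct normalisation-and-limit argument that plays the $P$-tensor condition off against componentwise complementarity. Nonemptiness will require a topological-degree / homotopy argument whose admissibility on a large ball relies precisely on that same boundedness.

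For compactness, closedness of SOL$(\mathcal{T},q)$ is immediate from continuity of $x\mapsto \mathcal{T}x^{m-1}$, so it suffices to prove boundedness. Suppose, for contradiction, that $\{x^{(k)}\}\subset $ SOL$(\mathcal{T},q)$ satisfies $\|x^{(k)}\|\to\infty$, set $y^{(k)}=x^{(k)}/\|x^{(k)}\|$, and pass to a subsequence with $y^{(k)}\to \bar y$, $\|\bar y\|=1$, $\bar y\geq 0$. Dividing the feasibility relation $\mathcal{T}(x^{(k)})^{m-1}+q\geq 0$ by $\|x^{(k)}\|^{m-1}$ and taking limits yields $\mathcal{T}\bar y^{m-1}\geq 0$; dividing the componentwise complementarity identity $x^{(k)}_j(\mathcal{T}(x^{(k)})^{m-1})_j=-x^{(k)}_j q_j$ by $\|x^{(k)}\|^m$ and taking limits yields $\bar y_j(\mathcal{T}\bar y^{m-1})_j=0$ for every $j\in[n]$. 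Since $\bar y\neq 0$, this contradicts the $P$-tensor definition, which demands some index $j$ with $\bar y_j(\mathcal{T}\bar y^{m-1})_j>0$.

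For nonemptiness, I would work with the natural map $F(x)=\min\{x,\mathcal{T}x^{m-1}+q\}$ (componentwise), whose zeros are exactly SOL$(\mathcal{T},q)$, and invoke Brouwer (topological) degree. A first observation is that whenever $r\in\mathbb{R}^n$ is strictly positive, TCP$(\mathcal{T},r)$ has $x=0$ as its unique solution: if $x\neq 0$ were another, the $P$-tensor property would furnish $j$ with $x_j\neq 0$ and $x_j(\mathcal{T}x^{m-1})_j>0$, yet complementarity with $x_j>0$ forces $(\mathcal{T}x^{m-1})_j=-r_j<0$, a contradiction. Picking $\lambda>0$ so that $q+\lambda e>0$ (with $e$ the all-ones vector), consider the homotopy
\begin{equation*}
F_\tau(x)=\min\{x,\;\mathcal{T}x^{m-1}+q+(1-\tau)\lambda e\},\qquad \tau\in[0,1].
\end{equation*}
The compactness argument used only the $P$-tensor property of $\mathcal{T}$, so it applies verbatim with $q$ replaced by $q(\tau)=q+(1-\tau)\lambda e$, producing a radius $R$ independent of $\tau$ such that every zero of $F_\tau$ lies inside $B_R$. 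The homotopy is therefore admissible and $\deg(F_0,B_R,0)=\deg(F_1,B_R,0)$; at $\tau=0$ the unique zero is $x=0$. The main obstacle is the degree/index computation at this zero: because $F_0$ is only piecewise smooth, the index must be read off from Clarke's generalised Jacobian (or from a smooth $C$-function perturbation such as Fischer--Burmeister), and showing it equals $\pm 1$ is the technical heart of the argument. Granting that, $\deg(F_1,B_R,0)\neq 0$, and SOL$(\mathcal{T},q)$ is nonempty.
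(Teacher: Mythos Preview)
The paper does not prove this theorem at all: it is quoted in the Preliminaries as a known result from \cite{bai2016global} and is only invoked later (in Remark~3.3) to draw a consequence for Nekrasov $Z$ tensors. So there is no proof in the paper to compare your proposal against.

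That said, your argument is sound and is essentially the standard one. The compactness half is exactly right. In the nonemptiness half, your only misstep is rhetorical: you describe the index computation at $x=0$ for $F_0$ as ``the technical heart of the argument'' and invoke Clarke's generalised Jacobian or a Fischer--Burmeister smoothing. None of that is needed. Since $q(0)=q+\lambda e>0$, for all $x$ in a sufficiently small neighbourhood of the origin one has $\mathcal{T}x^{m-1}+q(0)>x$ componentwise (both $\mathcal{T}x^{m-1}$ and $x$ tend to $0$ while $q(0)$ stays strictly positive), so $F_0(x)=x$ there. Thus $F_0$ coincides with the identity map near its unique zero, and the local index is $+1$ by inspection. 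With this simplification your homotopy argument goes through cleanly and yields $\deg(F_1,B_R,0)=1\neq 0$, giving nonemptiness.
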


\section{Main results}

We begin by introducing Nekrasov $Z$ tensor.

\begin{defn}
$\mathcal{T}\in \mathbb{R}^{[m,n]}$ is a Nekrasov $Z$ tensor if $\mathcal{T}$ is a Nekrasov tensor as well as a $Z$ tensor.
\end{defn}

The following is an example of a Nekrasov $Z$ tensor.

\begin{examp}\label{example of nekrasov Z tensor}
We consider the Example 3.3 of \cite{zhang2018nekrasov}. Let $\mathcal{B}\in \mathbb{R}^{[4,4]}$ be such that $b_{1111}=8,\; b_{2222}=3.8,\; b_{3333}=3,\; b_{4444}=10,\; b_{1112}=b_{2111}=b_{1211}=b_{1121}=-1,\;$ $b_{3222}=b_{2322}=b_{2232}=b_{2223}=-1,$ $b_{4441}=b_{4414}=b_{4144}=b_{1444}=-3$ and all other elements of $\mathcal{B}$ are zeros. Then $R_1(\mathcal{B})=6,\; R_2(\mathcal{B})=4,\; R_3(\mathcal{B})=1,\; R_4(\mathcal{B})=9.$ Since $|b_{2222}=| < R_2(\mathcal{B}),$ so $\mathcal{B}$ is not diagonally dominant tensor. However $\lambda_1(\mathcal{B})=6,\; \lambda_2(\mathcal{B})=3.75,\;\lambda_3(\mathcal{B})=0.98,\;\lambda_4(\mathcal{B})=9.$ Also $|b_{ii...i}|>\lambda_i(\mathcal{B}),\; \forall\; i\in [4].$ Hence $\mathcal{B}$ is Nekrasov tensor. Also all the off-diagonal elements of $\mathcal{B}$ are nonpositive. Therefore $\mathcal{B}$ is a $Z$ tensor. Hence $\mathcal{B}$ is a Nekrasov $Z$ tensor.
\end{examp}

Now we show that $P$-tensor contains even order Nekrasov $Z$ tensor with positive diagonal elements.

\begin{theorem}\label{1st theorem of Nekrasov Z tensor}
A Nekrasov $Z$ tensor of even order with positive diagonal elements is a $P$-tensor.% nonsingular $M$-tensor.
\end{theorem}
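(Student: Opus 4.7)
The plan is to observe that the statement reduces to a direct chaining of two results that have already been cited in the preliminaries, so essentially no new machinery is required. The $Z$-tensor part of the hypothesis plays no active role in the argument (it is only needed because it is built into the definition of a Nekrasov $Z$ tensor); the work is carried by the Nekrasov condition together with the parity and positivity of the diagonal.

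First I would let $\mathcal{T}\in\mathbb{R}^{[m,n]}$ be a Nekrasov $Z$ tensor of even order $m$ whose diagonal entries $t_{i\cdots i}$ are all positive. By the definition of Nekrasov $Z$ tensor, $\mathcal{T}$ is in particular a Nekrasov tensor in the sense of the definition recalled from \cite{zhang2018nekrasov}. I would then invoke Theorem \ref{nekrasov implies nonsingular H} to conclude that $\mathcal{T}$ is a nonsingular $H$ tensor.

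Next, since $m$ is even and the diagonal entries $t_{i\cdots i}$ are strictly positive by hypothesis, the hypotheses of Theorem \ref{nonsingular H implies P} are satisfied, so $\mathcal{T}$ is a $P$-tensor. This finishes the argument.

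There is no serious obstacle here; the only thing one has to be careful about is to verify that the hypotheses of the two cited theorems match exactly. In particular, I would point out explicitly that the Nekrasov condition is phrased using $|t_{i\cdots i}|$, which under our assumption $t_{i\cdots i}>0$ reduces to $t_{i\cdots i}$ itself, so the hypothesis of Theorem \ref{nekrasov implies nonsingular H} is unambiguously met. If desired, one can append a short remark noting that the $Z$-structure was not used in the deduction but is natural in this setting because the example in Example \ref{example of nekrasov Z tensor} and the companion results developed later in the section (on transformation to a diagonally dominant tensor) exploit it.
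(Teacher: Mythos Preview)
Your proposal is correct and follows exactly the same route as the paper: apply Theorem \ref{nekrasov implies nonsingular H} to pass from Nekrasov to nonsingular $H$, then Theorem \ref{nonsingular H implies P} to obtain the $P$-tensor conclusion. Your additional remarks about the $Z$-structure being inactive and the reduction $|t_{i\cdots i}|=t_{i\cdots i}$ are accurate observations that the paper does not make explicit, but the core argument is identical.
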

\begin{proof}
Let $\mathcal{T}$ be a Nekrasov $Z$ tensor which has positive diagonal elements. By Theorem \ref{nekrasov implies nonsingular H}, a Nekrasv tensor is a $H$ tensor which is also nonsingular. Therefore $\mathcal{T}$ is a nonsingular $H$ tensor which has positive diagonal elements. Therefore by Theorem \ref{nonsingular H implies P} $\mathcal{T}$ is a $P$-tensor. % nonsingular $M$-tensor.
\end{proof}

Since a Nekrasov tensor is a nonsingular $H$ tensor, then there exists a positive diagonal matrix $W$ such that $\mathcal{T}$ is a quasi-diagonally dominant tensor. Here we are developing a diagonal matrix $W$ for which $\mathcal{T}W$ is a diagonally dominant tensor.

\begin{theorem}
Suppose $\mathcal{T}\in \mathbb{R}^{[m,n]},$ $m$ is even. Let $\mathcal{T}$ be a Nekrasov $Z$ tensor whose diagonal elements are positive and let $W$ be a diagonal matrix such that,
\begin{equation}
    W= diag\left( \left(\frac{\Lambda_1(\mathcal{T})}{t_{11\cdots 1}}\right)^{\frac{1}{m-1}},\;  \left(\frac{\Lambda_2(\mathcal{T})}{t_{22\cdots 2}}\right)^{\frac{1}{m-1}},\; \cdots\; , \left(\frac{\Lambda_n(\mathcal{T})}{t_{nn\cdots n}}\right)^{\frac{1}{m-1}} \right)
\end{equation}
Then $\mathcal{T}W$ is a diagonally dominant $Z$ tensor.
\end{theorem}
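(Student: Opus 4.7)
The plan is to compute the entries of $\mathcal{T}W$ directly from Shao's general product and then verify the $Z$-property and diagonal dominance separately. Since $W$ is diagonal with entries $w_j = \left(\Lambda_j(\mathcal{T})/t_{jj\cdots j}\right)^{1/(m-1)}$, the defining sum of Shao's product collapses and gives
\begin{equation*}
(\mathcal{T}W)_{i\, i_2 \cdots i_m} = t_{i\, i_2 \cdots i_m}\, w_{i_2} w_{i_3} \cdots w_{i_m}
\end{equation*}
for every index tuple. Setting $(i_2, \ldots, i_m) = (i, \ldots, i)$ immediately yields the diagonal entry $t_{ii\cdots i}\, w_i^{m-1} = \Lambda_i(\mathcal{T})$ by the choice of $W$.

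The $Z$-tensor part is then the easy half. The Nekrasov hypothesis together with positivity of the diagonal gives $t_{jj\cdots j} > \Lambda_j(\mathcal{T}) \geq 0$, so every $w_j$ is a well-defined nonnegative real number; multiplying the nonpositive off-diagonal entries of $\mathcal{T}$ by nonnegative scalars keeps them nonpositive, so $\mathcal{T}W$ is a $Z$ tensor.

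For the diagonal-dominance inequality I would expand
\begin{equation*}
R_i(\mathcal{T}W) = \sum_{(i_2, \ldots, i_m) \neq (i, \ldots, i)} \lvert t_{i\, i_2 \cdots i_m}\rvert\, w_{i_2} \cdots w_{i_m}
\end{equation*}
and partition the sum according to whether $(i_2, \ldots, i_m) \in [i-1]^{m-1}$ or not. The first piece matches, term by term, the first sum in the definition of $\Lambda_i(\mathcal{T})$. For the second piece I would use the key inequality $w_j < 1$ for every $j$, which follows from the strict Nekrasov inequality $\Lambda_j(\mathcal{T}) < t_{jj\cdots j}$; this bounds the weighted sum above by its unweighted counterpart, which is precisely the second sum in the definition of $\Lambda_i(\mathcal{T})$. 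Adding the two bounds yields $R_i(\mathcal{T}W) \leq \Lambda_i(\mathcal{T}) = (\mathcal{T}W)_{ii\cdots i}$, which is the required diagonal dominance.

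The only mildly subtle step is aligning the split of $R_i(\mathcal{T}W)$ with the recursive split in the definition of $\Lambda_i(\mathcal{T})$; once that match-up is written out explicitly, everything else reduces to the elementary observation $w_j \leq 1$. The positivity of the diagonal entries is used only to guarantee that each $w_j$ is a well-defined real number, and the even-order hypothesis is in fact not needed for this particular conclusion (it would enter if one wished to combine this result with Theorem \ref{1st theorem of Nekrasov Z tensor} to pass to $P$-tensor consequences).
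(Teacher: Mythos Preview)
Your proposal is correct and follows essentially the same route as the paper's own proof: compute the entries of $\mathcal{T}W$, identify the diagonal entry as $\Lambda_i(\mathcal{T})$, note that nonnegativity of the $w_j$ preserves the $Z$-structure, and then use the split of the off-diagonal sum along $[i-1]^{m-1}$ together with $w_j\le 1$ to compare $R_i(\mathcal{T}W)$ with $\Lambda_i(\mathcal{T})$. The paper runs the final inequality chain in the opposite direction (starting from $\Lambda_i(\mathcal{T})$ and bounding below), but the substance is identical; your side remark that the even-order hypothesis is not actually used in this argument is also accurate.
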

\begin{proof}
Firstly note that $\frac{\Lambda_i(\mathcal{T})}{t_{ii\cdots i}} \geq 0$ and so $\left(\frac{\Lambda_i(\mathcal{T})}{t_{ii\cdots i}}\right)^{\frac{1}{m-1}} \geq 0, \; \forall\; i\in [n],$ as $\mathcal{T}$ is a Nekrasov $Z$ tensor with positive diagonal elements and $m$ is even. So $W$ is a nonnegative diagonal matrix. Let $\mathcal{B}=(b_{i_1 i_2 ...i_m})\in \mathbb{R}^{[m,n]}$ be such that $\mathcal{B} = \mathcal{T}W.$ Then
\begin{equation}
    b_{i_1 i_2 ... i_m}= t_{i_1 i_2 ... i_m} \left(\frac{\Lambda_{i_2}(\mathcal{T})}{t_{{i_2}{i_2}\cdots {i_2}}} \right)^{\frac{1}{m-1}} \cdots \left(\frac{\Lambda_{i_m}(\mathcal{T})}{t_{{i_m}{i_m}\cdots {i_m}}} \right)^{\frac{1}{m-1}}
\end{equation}
and
\begin{equation}
    b_{ii ...i} =\Lambda_i(\mathcal{T}).
\end{equation}
It is easy to observe that the signs of $\mathcal{B}$ are same as the signs of $\mathcal{T}.$ This infers that $\mathcal{B}$ is a $Z$ tensor, since $\mathcal{T}$ is a $Z$ tensor. Now we show that $\mathcal{B}$ is a  diagonally dominant tensor. We write
\begin{equation}\label{for the conclusion of Nekrasov Z}
   t_{k\cdots k}= \lvert t_{k\cdots k} \rvert > \Lambda_k(\mathcal{T}), \;\; \forall\; k\in [n],
\end{equation}
since $\mathcal{T}$ is a Nekrasov tensor.
Now for each $i\in [n]$ we obtain,
\begin{align*}
  |b_{i\cdots i}| &=b_{i\cdots i}\\
                  &=\Lambda_i (\mathcal{T})\\
                  &=\sum_{i_2 ... i_m \in [i-1]^{m-1}} \lvert t_{i i_2 ... i_m} \rvert \left(\frac{\Lambda_{i_2} (\mathcal{T})}{\lvert t_{i_2  ... i_2} \rvert}\right)^{\frac{1}{m-1}} \cdots \left(\frac{\Lambda_{i_m} (\mathcal{T})}{\lvert t_{i_m  ... i_m} \rvert}\right)^{\frac{1}{m-1}}\\
                  & \qquad + \sum_{i_2 ... i_m \notin [i-1]^{m-1}, (i_2 ... i_m)\neq (i ... i)} \lvert t_{i i_2 ... i_m} \rvert\\
                  &\geq \sum_{i_2 ... i_m \in [i-1]^{m-1}} \lvert t_{i i_2 ... i_m} \rvert \left(\frac{\Lambda_{i_2} (\mathcal{T})}{\lvert t_{i_2  ... i_2} \rvert}\right)^{\frac{1}{m-1}} \cdots \left(\frac{\Lambda_{i_m} (\mathcal{T})}{\lvert t_{i_m  ... i_m} \rvert}\right)^{\frac{1}{m-1}} \\
                  & \qquad + \sum_{i_2 ... i_m \notin [i-1]^{m-1}, (i_2 ... i_m)\neq (i ... i)} \lvert t_{i i_2 ... i_m} \rvert \left(\frac{\Lambda_{i_2} (\mathcal{T})}{\lvert t_{i_2  ... i_2} \rvert}\right)^{\frac{1}{m-1}} \cdots \left(\frac{\Lambda_{i_m} (\mathcal{T})}{\lvert t_{i_m  ... i_m} \rvert}\right)^{\frac{1}{m-1}} \text{ by (\ref{for the conclusion of Nekrasov Z})} \\
                  &=\sum_{i_2 ... i_m \in [n]^{m-1}, (i_2 ... i_m)\neq (i ... i) } \lvert t_{i i_2 ... i_m} \rvert \left(\frac{\Lambda_{i_2} (\mathcal{T})}{\lvert t_{i_2  ... i_2} \rvert}\right)^{\frac{1}{m-1}} \cdots \left(\frac{\Lambda_{i_m} (\mathcal{T})}{\lvert t_{i_m  ... i_m} \rvert}\right)^{\frac{1}{m-1}}\\
                  &=\sum_{(i_2 ... i_m)\neq (i ... i) } |b_{i i_2 ... i_m}|.
\end{align*}
This implies that $\mathcal{B}$ is a diagonally dominant tensor.
\end{proof}

Now we want to find a class of tensors that will contain Nekrasov $Z$ tensors with positive diagonal elements. To do so we use a decomposition of tensors which will be useful in our discussion.\\

\NI Given a tensor $\mathcal{T}=(t_{i_1 i_2 ... i_m})\in \mathbb{R}^{[m,n]},$ we can write $\mathcal{T}$ as
\begin{equation}\label{decomposition equation}
    \mathcal{T} = \mathcal{B}^+ + \mathcal{C},
\end{equation}
where $\forall \; i\in [n]$ the elements of rowsubtensors of $\mathcal{B}$ and $\mathcal{C}$ are given by,
\begin{equation}\label{law for B+}
(R_i(\mathcal{B}^+))_{i_2 \cdots i_m} = t_{i i_2 ... i_m} - r_i^+, \;\forall \; i_l\in [n], \; l\in [m]
\end{equation}
and
\begin{equation}\label{law for C}
(R_i(\mathcal{C}))_{i_2 \cdots i_m} =r_i^+ , \;\forall \; i_l\in [n], \; l\in [m]..
\end{equation}
with
\begin{equation}\label{law for r_i^+}
    r_i^+ =\max_{i_2, ..., i_m \in [n], \; ( i_2 \cdots i_m) \neq (i \cdots i)} \{ 0, t_{i i_2 ... i_m} \}
\end{equation}

\begin{prop}
Let $\mathcal{T}=(t_{i_1 i_2 ... i_m})\in \mathbb{R}^{[m,n]}.$ If $\mathcal{T}$ is decomposed as given in the equation (\ref{decomposition equation}), then $\mathcal{B}^+$ is a $Z$ tensor and $\mathcal{C}$ is nonnegative tensor.
\end{prop}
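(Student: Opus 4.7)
The plan is to verify both claims directly from the defining formulas for $\mathcal{B}^+$ and $\mathcal{C}$, since the proposition is essentially a bookkeeping check on the decomposition.

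First I would handle $\mathcal{C}$. By the defining equation (\ref{law for C}), every entry of the $i$th row subtensor equals the scalar $r_i^+$. Since $r_i^+$ is defined in (\ref{law for r_i^+}) as a maximum taken over a set that always includes $0$, we have $r_i^+ \geq 0$ for every $i \in [n]$. Therefore every entry of $\mathcal{C}$ is nonnegative, and $\mathcal{C}$ is a nonnegative tensor.

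Next I would show $\mathcal{B}^+$ is a $Z$ tensor. By definition, this requires verifying that every off-diagonal entry is nonpositive, i.e., for every $i \in [n]$ and every $(i_2,\ldots,i_m) \neq (i,\ldots,i)$ in $[n]^{m-1}$,
\[
(R_i(\mathcal{B}^+))_{i_2 \cdots i_m} \leq 0.
\]
From (\ref{law for B+}) this equals $t_{i i_2 \cdots i_m} - r_i^+$. But by the definition (\ref{law for r_i^+}) of $r_i^+$ as a maximum taken over exactly those off-diagonal tuples (together with $0$), we have $r_i^+ \geq t_{i i_2 \cdots i_m}$ for every such $(i_2,\ldots,i_m)$. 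Substituting gives $t_{i i_2 \cdots i_m} - r_i^+ \leq 0$, which is the desired inequality.

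There is no real obstacle here; the proof is an immediate unfolding of definitions (\ref{law for B+}), (\ref{law for C}), and (\ref{law for r_i^+}). The only point worth a sentence in writing is to note that the condition $(i_2,\ldots,i_m)\neq(i,\ldots,i)$ used to define $r_i^+$ matches exactly the off-diagonal indices of the $i$th row subtensor, so the bound $r_i^+ \geq t_{i i_2 \cdots i_m}$ is available precisely where it is needed.
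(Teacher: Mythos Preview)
Your proposal is correct and follows essentially the same approach as the paper: both arguments unfold the definitions (\ref{law for B+}), (\ref{law for C}), (\ref{law for r_i^+}) directly, using that $r_i^+\ge 0$ to get nonnegativity of $\mathcal{C}$ and that $r_i^+\ge t_{i i_2\cdots i_m}$ on off-diagonal indices to get the $Z$ tensor property of $\mathcal{B}^+$. The only cosmetic difference is that the paper treats $\mathcal{B}^+$ first and $\mathcal{C}$ second.
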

\begin{proof}
Firstly, by the definition of the tensor $\mathcal{B}^+$ we have $b_{i_1 i_2 ... i_m} = t_{i_1 i_2 ... i_m} - r_{i_1}^+,$ for all $i_1,i_2, ... ,i_m \in [n].$
Now from the definition of $r_{i_1}^+$ given by equation (\ref{law for r_i^+}) for all $i_1, i_2, ... ,i_m \in [n],$ we write
\begin{align*}
    b_{i_1 i_2 ... i_m} &= t_{i_1 i_2 ... i_m} - r_{i_1}^+\\
    &= t_{i_1 i_2 ... i_m} - \max_{i_2, ..., i_m \in [n], \; ( i_2 \cdots i_m) \neq (i \cdots i)} \{ 0, t_{i i_2 ... i_m} \} \\
    &\leq 0, \text{ when } ( i_2 \cdots i_m) \neq (i \cdots i).
\end{align*}
Therefore all off diagonal elements of the tensor $\mathcal{B}^+$ are nonpositive. Hence the tensor $\mathcal{B}^+$ is a $Z$ tensor.

Secondly, observe that the elements of $\mathcal{C}$ are $r_i^+,$ $\forall \;i\in[n].$ Now by equation (\ref{law for r_i^+}) we write
\begin{equation*}
     r_i^+ =\max_{i_2, ..., i_m \in [n], \; ( i_2 \cdots i_m) \neq (i \cdots i)} \{ 0, t_{i i_2 ... i_m} \}\geq 0.
\end{equation*}
 This implies $\mathcal{C}$ is a nonnegative tensor.
\end{proof}

\begin{remk}
If the tensor $\mathcal{B}^+$ is a Nekrasov tensor then $\mathcal{B}^+$ becomes a Nekrasov $Z$ tensor.
\end{remk}

\begin{remk}
The class of tensors $\mathcal{T}$ that can be expressed as $\mathcal{T} = \mathcal{B}^+ + \mathcal{C},$ where $\mathcal{B}^+$ and $\mathcal{C}$ are constructed by (\ref{law for B+}), (\ref{law for C}) and (\ref{law for r_i^+}) with $\mathcal{B}^+$ as a Nekrasov tensor contains the class of Nekrasov $Z$ tensors.
\end{remk}

\begin{remk}
The $P$ tensors play a crucial role in tensor complementarity theory. We know that the SOL$(\mathcal{T}, q)$ is nonempty and compact if $\mathcal{T}$ is a $P$ tensor. Here we show that $\mathcal{T},$ a Nekrasov $Z$ tensor of even order which has positive diagonal elements, is a $P$-tensor. Hence we conclude that the solution set of TCP$(\mathcal{T}, q)$ is nonempty and compact.
\end{remk}

\section{Conclusion}
In this article, we introduce Nekrasov $Z$ tensor. For a Nekrasov $Z$ tensor $\mathcal{T}$ we find $W$ such that $\mathcal{T}W$ is a diagonally dominant tensor. We prove that the Nekrasov $Z$ tensors of even order with positive diagonal elements are contained in the class of $P$-tensor.

\section{Acknowledgment}
The author R. Deb acknowledges CSIR, India, JRF scheme for financial support.

\bibliographystyle{plain}
\bibliography{referencesall}

\begin{thebibliography}{10}

\bibitem{bai2016global}
Xue-Li Bai, Zheng-Hai Huang, and Yong Wang.
\newblock Global uniqueness and solvability for tensor complementarity
  problems.
\newblock {\em Journal of Optimization Theory and Applications}, 170(1):72--84,
  2016.

\bibitem{bailey1969bounds}
Duane~W Bailey and Douglas~E Crabtree.
\newblock Bounds for determinants.
\newblock {\em Linear Algebra and its Applications}, 2(3):303--309, 1969.

\bibitem{cottle2009linear}
RW~Cottle, JS~Pang, and RE~Stone.
\newblock {\em The linear complementarity problem}.
\newblock SIAM, 2009.

\bibitem{cvetkovic2009new}
Ljiljana Cvetkovi{\'c}, Vladimir Kosti{\'c}, and Sonja Rau{\v{s}}ki.
\newblock A new subclass of h-matrices.
\newblock {\em Applied mathematics and computation}, 208(1):206--210, 2009.

\bibitem{das2016properties}
AK~Das.
\newblock Properties of some matrix classes based on principal pivot transform.
\newblock {\em Annals of Operations Research}, 243(1):375--382, 2016.

\bibitem{das2016generalized}
AK~Das, R~Jana, and Deepmala.
\newblock On generalized positive subdefinite matrices and interior point
  algorithm.
\newblock In {\em International Conference on Frontiers in Optimization: Theory
  and Applications}, pages 3--16. Springer, 2016.

\bibitem{das2017finiteness}
AK~Das, R~Jana, and Deepmala.
\newblock Finiteness of criss-cross method in complementarity problem.
\newblock In {\em International Conference on Mathematics and Computing}, pages
  170--180. Springer, 2017.

\bibitem{das2018invex}
AK~Das, R~Jana, and Deepmala.
\newblock Invex programming problems with equality and inequality constraints.
\newblock {\em Transactions of A. Razmadze Mathematical Institute},
  172(3):361--371, 2018.

\bibitem{das2019some}
AK~Das, R~Jana, and Deepmala.
\newblock Some aspects on solving transportation problem.
\newblock {\em Yugoslav Journal of Operations Research}, 30(1):45--57, 2019.

\bibitem{ding2018p}
Weiyang Ding, Ziyan Luo, and Liqun Qi.
\newblock P-tensors, p0-tensors, and their applications.
\newblock {\em Linear Algebra and its Applications}, 555:336--354, 2018.

\bibitem{ding2013m}
Weiyang Ding, Liqun Qi, and Yimin Wei.
\newblock M-tensors and nonsingular m-tensors.
\newblock {\em Linear Algebra and Its Applications}, 439(10):3264--3278, 2013.

\bibitem{dutta2021some}
A~Dutta and AK~Das.
\newblock On some properties of $ k $-type block matrices in the context of
  complementarity problem.
\newblock {\em arXiv preprint arXiv:2109.09549}, 2021.

\bibitem{dutta2022on}
A~Dutta, R~Jana, and AK~Das.
\newblock On column competent matrices and linear complementarity problem.
\newblock In {\em Proceedings of the Seventh International Conference on
  Mathematics and Computing}, pages 615--625. Springer Singapore, 2022.

\bibitem{garcia2014error}
Marta Garc{\'\i}a-Esnaola and Juan~Manuel Pe{\~n}a.
\newblock Error bounds for linear complementarity problems of nekrasov
  matrices.
\newblock {\em Numerical Algorithms}, 67(3):655--667, 2014.

\bibitem{huang2017formulating}
Zheng-Hai Huang and Liqun Qi.
\newblock Formulating an n-person noncooperative game as a tensor
  complementarity problem.
\newblock {\em Computational Optimization and Applications}, 66(3):557--576,
  2017.

\bibitem{huang2019tensor}
Zheng-Hai Huang and Liqun Qi.
\newblock Tensor complementarity problems—part $\text{III}$: applications.
\newblock {\em Journal of Optimization Theory and Applications},
  183(3):771--791, 2019.

\bibitem{jana2019hidden}
R~Jana, AK~Das, and A~Dutta.
\newblock On hidden ${Z}$-matrix and interior point algorithm.
\newblock {\em Opsearch}, 56(4):1108--1116, 2019.

\bibitem{jana2021iterative}
R~Jana, AK~Das, and VN~Mishra.
\newblock Iterative descent method for generalized leontief model.
\newblock {\em Proceedings of the National Academy of Sciences, India Section
  A: Physical Sciences}, 91(2):237--244, 2021.

\bibitem{jana2018processability}
R~Jana, AK~Das, and S~Sinha.
\newblock On processability of $\mbox{L}$emke's algorithm.
\newblock {\em Applications \& Applied Mathematics}, 13(2), 2018.

\bibitem{jana2018semimonotone}
R~Jana, AK~Das, and S~Sinha.
\newblock On semimonotone star matrices and linear complementarity problem.
\newblock {\em arXiv preprint arXiv:1808.00281}, 2018.

\bibitem{jana2021more}
R~Jana, A~Dutta, and AK~Das.
\newblock More on hidden ${Z}$-matrices and linear complementarity problem.
\newblock {\em Linear and Multilinear Algebra}, 69(6):1151--1160, 2021.

\bibitem{kolotilina2015some}
L~Yu Kolotilina.
\newblock Some characterizations of nekrasov and s-nekrasov matrices.
\newblock {\em Journal of Mathematical Sciences}, 207(5):767--775, 2015.

\bibitem{lim2005singular}
Lek-Heng Lim.
\newblock Singular values and eigenvalues of tensors: a variational approach.
\newblock In {\em 1st IEEE International Workshop on Computational Advances in
  Multi-Sensor Adaptive Processing, 2005.}, pages 129--132. IEEE, 2005.

\bibitem{mangasarian1976linear}
Olvi~L Mangasarian.
\newblock Linear complementarity problems solvable by a single linear program.
\newblock {\em Mathematical Programming}, 10(1):263--270, 1976.

\bibitem{mohan2001more}
SR~Mohan, SK~Neogy, and AK~Das.
\newblock More on positive subdefinite matrices and the linear complementarity
  problem.
\newblock {\em Linear Algebra and its Applications}, 338(1-3):275--285, 2001.

\bibitem{mohan2001classes}
SR~Mohan, SK~Neogy, and AK~Das.
\newblock On the classes of fully copositive and fully semimonotone matrices.
\newblock {\em Linear Algebra and its Applications}, 323(1-3):87--97, 2001.

\bibitem{mohan2004note}
SR~Mohan, SK~Neogy, and AK~Das.
\newblock A note on linear complementarity problems and multiple objective
  programming.
\newblock {\em Mathematical programming}, 100(2):339--344, 2004.

\bibitem{mondal2016discounted}
P~Mondal, S~Sinha, SK~Neogy, and AK~Das.
\newblock On discounted $\mbox{ARAT}$ semi-markov games and its complementarity
  formulations.
\newblock {\em International Journal of Game Theory}, 45(3):567--583, 2016.

\bibitem{neogy2016optimization}
SK~Neogy, RB~Bapat, and AK~Das.
\newblock Optimization models with economic and game theoretic applications.
\newblock {\em Annals of Operations Research}, 243(1):1--3, 2016.

\bibitem{neogy2005linear}
SK~Neogy and AK~Das.
\newblock Linear complementarity and two classes of structured stochastic
  games.
\newblock {\em Operations Research with Economic and Industrial Applications:
  Emerging Trends, eds: SR Mohan and SK Neogy, Anamaya Publishers, New Delhi,
  India}, pages 156--180, 2005.

\bibitem{neogy2005almost}
SK~Neogy and AK~Das.
\newblock On almost type classes of matrices with ${Q}$-property.
\newblock {\em Linear and Multilinear Algebra}, 53(4):243--257, 2005.

\bibitem{neogy2005principal}
SK~Neogy and AK~Das.
\newblock Principal pivot transforms of some classes of matrices.
\newblock {\em Linear algebra and its applications}, 400:243--252, 2005.

\bibitem{neogy2006some}
SK~Neogy and AK~Das.
\newblock Some properties of generalized positive subdefinite matrices.
\newblock {\em SIAM journal on matrix analysis and applications},
  27(4):988--995, 2006.

\bibitem{neogy2008mathematical}
SK~Neogy and AK~Das.
\newblock {\em Mathematical programming and game theory for decision making},
  volume~1.
\newblock World Scientific, 2008.

\bibitem{neogy2011generalized}
SK~Neogy and AK~Das.
\newblock Generalized monotone maps and complementarity problems.
\newblock In {\em Topics in Nonconvex Optimization}, pages 27--46. Springer,
  2011.

\bibitem{neogy2011singular}
SK~Neogy and AK~Das.
\newblock On singular ${N_0}$-matrices and the class ${Q}$.
\newblock {\em Linear algebra and its applications}, 434(3):813--819, 2011.

\bibitem{neogy2013weak}
SK~Neogy and AK~Das.
\newblock On weak generalized positive subdefinite matrices and the linear
  complementarity problem.
\newblock {\em Linear and Multilinear Algebra}, 61(7):945--953, 2013.

\bibitem{neogy2009modeling}
SK~Neogy, AK~Das, and RB~Bapat.
\newblock {\em Modeling, computation and optimization}, volume~6.
\newblock World Scientific, 2009.

\bibitem{neogy2012generalized}
SK~Neogy, AK~Das, and A~Gupta.
\newblock Generalized principal pivot transforms, complementarity theory and
  their applications in stochastic games.
\newblock {\em Optimization Letters}, 6(2):339--356, 2012.

\bibitem{neogy2008mixture}
SK~Neogy, AK~Das, S~Sinha, and A~Gupta.
\newblock On a mixture class of stochastic game with ordered field property.
\newblock In {\em Mathematical programming and game theory for decision
  making}, pages 451--477. World Scientific, 2008.

\bibitem{orera2019accurate}
H~Orera and JM~Pe{\~n}a.
\newblock Accurate inverses of nekrasov z-matrices.
\newblock {\em Linear Algebra and its Applications}, 574:46--59, 2019.

\bibitem{qi2005eigenvalues}
Liqun Qi.
\newblock Eigenvalues of a real supersymmetric tensor.
\newblock {\em Journal of Symbolic Computation}, 40(6):1302--1324, 2005.

\bibitem{shao2013general}
Jia-Yu Shao.
\newblock A general product of tensors with applications.
\newblock {\em Linear Algebra and its applications}, 439(8):2350--2366, 2013.

\bibitem{shao2016some}
Jiayu Shao and Lihua You.
\newblock On some properties of three different types of triangular blocked
  tensors.
\newblock {\em Linear Algebra and its Applications}, 511:110--140, 2016.

\bibitem{song2014properties}
Yisheng Song and Liqun Qi.
\newblock Properties of tensor complementarity problem and some classes of
  structured tensors.
\newblock {\em arXiv preprint arXiv:1412.0113}, 2014.

\bibitem{song2015properties}
Yisheng Song and Liqun Qi.
\newblock Properties of some classes of structured tensors.
\newblock {\em Journal of Optimization Theory and Applications},
  165(3):854--873, 2015.

\bibitem{song2017properties}
Yisheng Song and Liqun Qi.
\newblock Properties of tensor complementarity problem and some classes of
  structured tensors.
\newblock {\em Annals of Applied Mathematics}, 2017.

\bibitem{song2016properties}
Yisheng Song and Gaohang Yu.
\newblock Properties of solution set of tensor complementarity problem.
\newblock {\em Journal of Optimization Theory and Applications}, 170(1):85--96,
  2016.

\bibitem{yuan2014some}
Pingzhi Yuan and Lihua You.
\newblock Some remarks on p, p0, b and b0 tensors.
\newblock {\em Linear Algebra and its Applications}, 459:511--521, 2014.

\bibitem{zhang2018nekrasov}
Junli Zhang and Changjiang Bu.
\newblock Nekrasov tensors and nonsingular $\mathcal{H}$- tensors.
\newblock {\em Computational and Applied Mathematics}, 37(4):4917--4930, 2018.

\bibitem{zhang2014m}
Liping Zhang, Liqun Qi, and Guanglu Zhou.
\newblock M-tensors and some applications.
\newblock {\em SIAM Journal on Matrix Analysis and Applications},
  35(2):437--452, 2014.

\end{thebibliography}

\end{document}